\numberwithin{equation}{section}
\title[   ]{ A generalized strong convergence algorithm in the presence of errors for  variational inequality problems in Hilbert spaces}
 \author[M. Ghadampour, D. O'Regan,    E. Soori  and R. P. Agarwal ]{Mostafa Ghadampour$^{1}$,  Donal O'Regan $^{ 2}$ ,    $\,\,$ Ebrahim Soori$^{*,3}$ $\,\,$  and Ravi P. Agarwal $^{*,4}$ }
 \thanks{ \!\! \!\! \!\! \!\!* Corresponding author  \\2010 Mathematics Subject Classification: 47H09,47H10
  \\ E-mail addresses: m.ghadampour@gmail.com( m. ghadampour); sori.e@lu.ac.ir (E. Soori);  donal.oregan@nuigalway.ie (D. O'Regan);  agarwal@tamuk.edu (R. P. Agarwal). }
\theoremstyle{plain}
\newtheorem{lem}{\textbf{Lemma}}[section]
\newtheorem{thm}[lem]{\textbf{Theorem}}
\newtheorem{ex}[lem]{\textbf{Example}}
\newtheorem{op}{\textbf{Open problem}}
\theoremstyle{definition}
\theoremstyle{definition}
\theoremstyle{remark}
\begin{document}
\maketitle

\begin{center}
\begin{normalsize}
   $^{1, 3}$ Department  of Mathematics, Lorestan University, Lorestan, Khoramabad, Iran,\\
  $^{2}$ School of Mathematics, Statistics, National University of Ireland, Galway, Ireland,\\
    $^{4}$ Department  of Mathematics, Texas A $\&$ M University Kingsville, Kingsville, USA.

 \end{normalsize}
  \end{center}
\begin{abstract}
\begin{normalsize}
In this paper, we study the strong convergence of an algorithm to solve the variational inequality problem which extends a recent paper (Thong et al, Numerical Algorithms. 78, 1045-1060 (2018)). We  reduce and refine some of their algorithm conditions and  we prove the convergence of the algorithm in the presence of some computational errors. Then using MATLAB software, the result will be illustrated with some numerical examples. Also, we compare our algorithm with some other well known  algorithms.
\end{normalsize}
\end{abstract}
\begin{normalsize}
   \textbf{Keywords}: Variational inequality;   Contraction mapping; Monotone operator; Strong convergence; Computational errors.
   \end{normalsize}

\section{ Introduction}
Let $H$ be a real Hilbert space with the inner product $\langle., .\rangle$ and the norm $\|.\|$, and $C$ be a nonempty, closed and convex subset of $H$. The variational inequality $(VI)$ is to find a point $x \in C$ such that
\begin{equation}\label{vi}
  \langle Ax, y-x\rangle \geq 0, \;\; \forall y\in C,
\end{equation}
where $A$ is a mapping of C into $H$. The solutions set
of \eqref{vi} is denoted by $VI(A, C)$.
Variational inequalies arise in  the study of  network equilibriums,  optimization problems,  saddle point problem, Nash equilibrium  problems in noncooperative games etc.; sSee for example \cite{ags,th,tvc,bcpp,dm1,dm2,gr1,ks,ks2,ms,wxw} and the references therein.

A new algorithm was proposed  by Korpelevich \cite{ko}  for solving  problem $(VI)$ in the Euclidean space which is known as the extragradient method. Let  $x_1$ be an arbitrarily element in $H$ and consider
     \begin{equation}\label{algo0}
             \begin{cases}
                y_n=P_C(x_n-\lambda Ax_n), \\
                x_{n+1}=P_C(x_n-\lambda Ay_n).
             \end{cases}
          \end{equation}
where $\lambda$ is a number in $(0, 1)$, $P_C$ is the Euclidean least distance  projection of $H$ onto $C$ and
$A : C\rightarrow H$ is a monotone operator.  The next algorithm \eqref{algo1} was introduced by Tseng \cite{tp} and  applying the modified forward-backward(F-B) method  is a good alternative to the extragradient method (TEGM):
\begin{equation}\label{algo1}
     \begin{cases}
       y_{n}=P_C(x_n-\lambda Ax_n),  \\
       x_{n+1}=P_X(y_n-\lambda(Ay_n-Ax_n)),
     \end{cases}
   \end{equation}
where $X = C$ and $X=H$ if $A$ is Lipschitz continuous.
The following algorithm \eqref{algo3} was proposed by Shehu and Iyiola \cite{si} which a viscosity type subgradient extragradient
 method (VSEGM):
           \begin{equation}\label{algo3}
              \begin{cases}
                y_n=P_C(x_n-\lambda_n Ax_n), \\
                T_n=\{z \in H : \langle x_n - \lambda_n Ax_n - y_n, z-y_n \rangle \leq 0\},\\
                z_n= P_{T_n}( x_n - \lambda_n Ay_n),\\
                x_{n+1}=\alpha_nf(x_n)+(1-\alpha_n) z_n,
              \end{cases}
           \end{equation}
where the operator $A$ is monotone and Lipschitz continuous, $f$ is a strict
contraction mapping, $ l\in (0,1),  \;\mu \in (0, 1)$ and $\lambda_n= l^{m_n}$ where $m_n$ is the smallest nonnegative integer $m$ such that
           \begin{align*}
             \lambda\| Ax_n - Ay_n \| \leq \mu \|r_{l^{m_n}} (x_n) \|,
           \end{align*}
            where $r_{l^{m_n}}(x)= x- P_C(x - l^{m_n} A(x))$ for all $x\in C$.
             Recently, the sequence produced by the following algorithm was  introduced by Thong and Hieu \cite{th} based on Tseng's method (THEGM):
           \begin{equation}\label{algo}
              \begin{cases}
                y_n=P_C(x_n-\lambda_n Ax_n), \\
                z_n= y_n - \lambda_n(Ay_n -Ax_n),\\
                x_{n+1}=\alpha_nf(x_n)+(1-\alpha_n) z_n,
              \end{cases}
           \end{equation}
where the operator $A$ is monotone and Lipschitz continuous, $\gamma > 0, \; l\in (0,1),  \;\mu \in (0, 1)$ and $\lambda_n$ is chosen to be the largest $\lambda \in \{ \gamma, \gamma l, \gamma l^2, ...\}$ satisfying
           \begin{align}\label{1}
             \lambda\| Ax_n - Ay_n \| \leq \mu \| x_n - y_n \|.
           \end{align}

In this paper  substituting a sequence   $\{\beta_n\}\subset(0, 1)$ of coefficients  instead of the sequence $\{1-\alpha_n\}$ in the  algorithm \eqref{algo},   we extend algorithm \eqref{algo}. Moreover,  condition \eqref{1} will be removed just by a slight change in the coefficients  $\{\lambda_n
 \}$. Also a sequence of  computational errors in our algorithm is considered. The strong convergence of the proposed algorithm to a point of the variational inequality  $VI(C, A)$ will be proved under the presence of computational errors. Finally, some examples will be presented which will examine the  convergence of the proposed algorithm in different situations.
\section{preliminaries}
In this section,  some  basic concepts are presented.

Let $H$ be a real Hilbert space with the inner product $\langle .,.\rangle$	 and norm $\|.\|$ and suppose that $C$ is
a nonempty closed convex subset of $H$ and $A:C \rightarrow H$ is an operator.  The operator $A$ is said to be \\
 (i) monotone  if
         \begin{align*}
           \langle Ax - Ay, x - y\rangle\geq 0,\;\; \forall x, y \in C;
         \end{align*}
(ii) $L$-Lipchitz continuous if there exist $L>0$ such that
          \begin{align*}
            \| Ax-Ay\|\leq L\| x-y\|,\;\;\forall x,y\in C.
          \end{align*}
For the main results of this paper we need the following useful lemmas.
\begin{lem}\label{2.1}
Let $H$ be a real Hilbert space. Then, we have the following well-known
results:
          \begin{align*}
            \| x+y\|^2 &=\| x\|^2+2\langle x,y\rangle+\| y\|^2, \;\;\forall x,y\in H. \\
            \| x+y\|^2 &\leq\| x\|^2+2\langle y,x+y\rangle, \;\;\forall x,y\in H.
          \end{align*}
\end{lem}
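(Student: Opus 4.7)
The plan is to handle the two assertions separately, since both are standard Hilbert-space identities whose proofs only use bilinearity of the inner product together with the Cauchy--Schwarz inequality. No nontrivial machinery is needed, and the difficulty is essentially bookkeeping.

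For the first identity, I would simply expand using bilinearity and symmetry of the inner product. Writing $\|x+y\|^2 = \langle x+y, x+y\rangle$ and distributing gives $\langle x,x\rangle + \langle x,y\rangle + \langle y,x\rangle + \langle y,y\rangle$. Since $\langle x,y\rangle = \langle y,x\rangle$ on a real Hilbert space, this collapses to $\|x\|^2 + 2\langle x,y\rangle + \|y\|^2$, which is the claimed identity. This step is essentially immediate.

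For the second assertion, the plan is to obtain the inequality from the identity by bounding the cross term in a different way. First I would split $\|x+y\|^2 = \langle x, x+y\rangle + \langle y, x+y\rangle$, again by bilinearity. Then I would estimate $\langle x, x+y\rangle$ via Cauchy--Schwarz to get $\langle x, x+y\rangle \leq \|x\|\|x+y\|$, and then apply the elementary AM--GM inequality $\|x\|\|x+y\| \leq \tfrac{1}{2}\|x\|^2 + \tfrac{1}{2}\|x+y\|^2$. Substituting these bounds yields
\begin{equation*}
\|x+y\|^2 \leq \tfrac{1}{2}\|x\|^2 + \tfrac{1}{2}\|x+y\|^2 + \langle y, x+y\rangle,
\end{equation*}
and subtracting $\tfrac{1}{2}\|x+y\|^2$ from both sides and multiplying by $2$ gives $\|x+y\|^2 \leq \|x\|^2 + 2\langle y, x+y\rangle$.

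The only mild obstacle is deciding which route to take for the second inequality; an equally valid alternative is to start from the first identity and write $2\langle x, y\rangle = 2\langle y, x+y\rangle - 2\|y\|^2$, so that $\|x+y\|^2 = \|x\|^2 + 2\langle y, x+y\rangle - \|y\|^2 \leq \|x\|^2 + 2\langle y, x+y\rangle$, since $\|y\|^2 \geq 0$. This algebraic derivation is in fact cleaner, so I would actually present this version in the final proof, relegating the Cauchy--Schwarz argument to a remark if needed.
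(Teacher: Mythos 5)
Your proof is correct: the expansion via bilinearity gives the first identity, and both of your routes to the second inequality (Cauchy--Schwarz with AM--GM, or the cleaner observation that $\|x+y\|^2=\|x\|^2+2\langle y,x+y\rangle-\|y\|^2$ and dropping $-\|y\|^2\leq 0$) are valid. The paper states this lemma as a well-known fact without proof, and your argument is exactly the standard one it implicitly relies on.
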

\begin{lem}\label{2.3}
($Xu$, \cite{xu})  Let $\{a_n\}$ be a sequence of nonnegative real numbers satisfying the following relation:
            \begin{align*}
              a_{n+1} \leq(1-\alpha_n)a_n+\alpha_n\sigma_n+\gamma_n, \;\;n\geq1,
             \end{align*}
where
\begin{enumerate}
  \item [(a)]  $\{\alpha_n\}\subset[0,1], \sum_{n=1}^{\infty}\alpha_n=\infty$,\\
  \item [(b)]  $ \limsup\sigma_n\leq0$,\\
  \item [(c)]  $\gamma_n\geq0 (n\geq1), \sum_{n=1}^{\infty}\gamma_n<\infty$.
\end{enumerate}
  Then, $a_n\rightarrow0 \;as\; n\rightarrow\infty.$
\end{lem}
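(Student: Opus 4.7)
The plan is to prove the lemma by an $\varepsilon$-argument: fix $\varepsilon>0$, absorb the tail of $\{\sigma_n\}$ and $\{\gamma_n\}$ into controlled quantities, iterate the recurrence from a large index, and then let $\varepsilon\downarrow 0$. Because $\limsup\sigma_n\le 0$ and $\sum\gamma_n<\infty$, I would first choose $N$ so large that $\sigma_n\le\varepsilon$ for all $n\ge N$ and simultaneously $\sum_{k\ge N}\gamma_k<\varepsilon$. For $n\ge N$ the recurrence then simplifies to
\[
a_{n+1}\le (1-\alpha_n)a_n+\alpha_n\varepsilon+\gamma_n.
\]

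Next I would iterate this inequality from $N$ to $n$. Writing $\Pi_{k,n}:=\prod_{j=k+1}^{n}(1-\alpha_j)$ with the convention $\Pi_{n,n}=1$, the standard unwinding gives
\[
a_{n+1}\le \Pi_{N-1,n}\,a_N \;+\; \varepsilon\sum_{k=N}^{n}\alpha_k\,\Pi_{k,n} \;+\; \sum_{k=N}^{n}\gamma_k\,\Pi_{k,n}.
\]
The key identity to invoke is the telescoping bound $\sum_{k=N}^{n}\alpha_k\,\Pi_{k,n}\le 1$, which follows by rewriting $\alpha_k\,\Pi_{k,n}=\Pi_{k,n}-\Pi_{k-1,n}$. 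Since $\Pi_{k,n}\le 1$, the last sum is bounded by $\sum_{k\ge N}\gamma_k<\varepsilon$. Therefore
\[
a_{n+1}\le \Pi_{N-1,n}\,a_N+\varepsilon+\varepsilon.
\]

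To kill the first term I would use the elementary estimate $1-t\le e^{-t}$ for $t\in[0,1]$, which gives $\Pi_{N-1,n}\le\exp\!\bigl(-\sum_{j=N}^{n}\alpha_j\bigr)\to 0$ as $n\to\infty$ by hypothesis (a). Consequently $\limsup_{n\to\infty}a_{n+1}\le 2\varepsilon$, and since $\varepsilon>0$ was arbitrary and $a_n\ge 0$, we conclude $a_n\to 0$.

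The main obstacle is really bookkeeping: keeping the three contributions (initial data, the $\sigma$-part, and the summable error $\gamma_n$) cleanly separated when iterating, and applying the telescoping identity for the weights $\alpha_k\,\Pi_{k,n}$ at the right moment. A minor subtlety is that the standard telescoping bound requires $\alpha_k\in[0,1]$, which is exactly assumption (a); without that bound one would need to argue separately that $\Pi_{k,n}\in[0,1]$.
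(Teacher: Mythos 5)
Your argument is correct and complete: the choice of $N$ controlling both $\sigma_n$ and the tail of $\sum\gamma_n$, the unwinding of the recursion with the telescoping identity $\alpha_k\Pi_{k,n}=\Pi_{k,n}-\Pi_{k-1,n}$, and the estimate $\Pi_{N-1,n}\le\exp\bigl(-\sum_{j=N}^{n}\alpha_j\bigr)\to 0$ together give $\limsup_n a_n\le 2\varepsilon$ for every $\varepsilon>0$, hence $a_n\to 0$. The paper itself offers no proof (it cites Xu), and your argument is essentially the standard one from that reference, so there is nothing further to reconcile.
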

\begin{lem}\label{pc}
 \cite{gr} Let $C$ be a closed and convex subset in a real Hilbert space $H$. Then
  $z=P_Cx$ if and only if $\langle x-z, y-z\rangle\leq 0 \;\;\forall y\in C$.
\end{lem}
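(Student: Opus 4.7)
The plan is to prove the equivalence by two short geometric arguments, using only convexity of $C$ and the Hilbert space identity of Lemma \ref{2.1}.

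For the forward direction ($z=P_Cx \Rightarrow \langle x-z,y-z\rangle\le 0$ for all $y\in C$), I would exploit that $z$ minimizes $w\mapsto\|x-w\|^{2}$ over $C$. Given an arbitrary $y\in C$ and $t\in(0,1]$, the point $z_{t}:=(1-t)z+ty=z+t(y-z)$ lies in $C$ by convexity, so $\|x-z_{t}\|^{2}\ge\|x-z\|^{2}$. Expanding via the first identity of Lemma \ref{2.1} yields
\begin{align*}
\|x-z\|^{2}-2t\langle x-z,y-z\rangle+t^{2}\|y-z\|^{2} \ \ge\ \|x-z\|^{2}.
\end{align*}
Dividing by $t>0$ and letting $t\to 0^{+}$ produces the desired inequality.

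For the converse, assuming $z\in C$ satisfies $\langle x-z,y-z\rangle\le 0$ for every $y\in C$, I would show directly that $\|x-z\|\le\|x-y\|$ for all $y\in C$ by writing $x-y=(x-z)+(z-y)$ and expanding with Lemma \ref{2.1}:
\begin{align*}
\|x-y\|^{2}=\|x-z\|^{2}+2\langle x-z,z-y\rangle+\|z-y\|^{2}.
\end{align*}
Since $\langle x-z,z-y\rangle=-\langle x-z,y-z\rangle\ge 0$ by hypothesis, the right-hand side dominates $\|x-z\|^{2}$, proving that $z$ realizes the distance from $x$ to $C$ and is therefore $P_Cx$.

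The only delicate point is the passage $t\to 0^{+}$ in the forward direction, but this is essentially automatic: after dividing by $t$, the inequality $-2\langle x-z,y-z\rangle+t\|y-z\|^{2}\ge 0$ is affine in $t$ and holds for every $t\in(0,1]$, so its constant term must be nonnegative. No heavy machinery is required, only convexity and the polarization-type identity already recorded in Lemma \ref{2.1}; in particular I would not need to invoke existence or uniqueness of the projection separately, since the reverse implication produces both as a byproduct of the chain $\|x-y\|^{2}\ge\|x-z\|^{2}+\|z-y\|^{2}$.
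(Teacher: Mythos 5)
Your proof is correct and is the standard argument: the forward direction by perturbing $z$ along the segment to $y$ and letting $t\to 0^{+}$, the converse by expanding $\|x-y\|^{2}=\|x-z\|^{2}+2\langle x-z,z-y\rangle+\|z-y\|^{2}$. The paper itself gives no proof of this lemma --- it is simply cited from Goebel and Reich --- so there is nothing to compare against; your write-up fills that gap adequately. One small caveat on your closing remark: the converse yields \emph{uniqueness} of the nearest point as a byproduct of the inequality $\|x-y\|^{2}\ge\|x-z\|^{2}+\|z-y\|^{2}$, but it does not yield \emph{existence}; the forward direction still presupposes that the minimizer $P_Cx$ exists, which is the separate (standard) fact that a closed convex set in a Hilbert space admits a unique nearest point. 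Since the lemma's statement already uses the notation $P_Cx$, this presupposition is harmless here.
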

\begin{lem}\label{2.4}
  \cite{mai} Let $\{a_n\}$ be a sequence of nonnegative real numbers such that
there exists a subsequence $\{a_{n_j}\}$ of $\{a_n\}$ such that $a_{n_j}<a_{n_j+1}$ for all $j\in \mathbb{N}$. Then
there exists a nondecreasing sequence $\{m_k\}$ of $\mathbb{N}$ such that $\lim_{k\rightarrow\infty} m_k=\infty $ and the
following properties are satisfied by all (sufficiently large) number $k\in\mathbb{N}$:
              \begin{align*}
                a_{m_k}\leq  a_{m_k+1}\;\; and \;\;   a_k\leq  a_{m_k+1}
              \end{align*}
In fact, $m_k$ is the largest number $n$ in the set $\{1, 2, ..., k\}$ such that $a_n < a_{n+1}$.
\end{lem}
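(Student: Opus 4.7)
The statement itself hands us the construction: define $m_k$ to be the largest $n \in \{1, 2, \ldots, k\}$ with $a_n < a_{n+1}$. So the plan is not to invent the sequence but simply to verify that it has the three announced properties (well-defined for large $k$, nondecreasing with $m_k \to \infty$, and satisfying the two bounds). First I would check that $m_k$ is well-defined whenever $k \geq n_1$: the hypothesis supplies infinitely many indices $n_j$ with $a_{n_j} < a_{n_j+1}$, so for $k \geq n_1$ the set $\{n \leq k : a_n < a_{n+1}\}$ is nonempty and has a maximum.

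Next I would argue that $\{m_k\}$ is nondecreasing and tends to infinity. Monotonicity is immediate from the definition: enlarging $k$ can only enlarge the set $\{n \leq k : a_n < a_{n+1}\}$, hence its maximum can only grow. For $m_k \to \infty$ I would argue by contradiction. Since $\{m_k\}$ is a nondecreasing sequence of positive integers, if it were bounded it would eventually stabilize at some value $M$. But then for every $k > M$, no index $n \in \{M+1, \ldots, k\}$ would satisfy $a_n < a_{n+1}$, contradicting the existence of infinitely many $n_j$'s; in particular, any $n_j > M$ would provide a contradiction.

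Finally I would verify the two inequalities. The first, $a_{m_k} \leq a_{m_k+1}$, is the defining property of $m_k$ (indeed, the strict version $a_{m_k} < a_{m_k+1}$ holds). For the second, $a_k \leq a_{m_k+1}$, I would split into cases. If $k = m_k$ then $a_k = a_{m_k} < a_{m_k+1}$, and we are done. If $k > m_k$, then by maximality no index $n \in \{m_k+1, m_k+2, \ldots, k\}$ satisfies $a_n < a_{n+1}$, so $a_n \geq a_{n+1}$ for each such $n$, giving the chain $a_{m_k+1} \geq a_{m_k+2} \geq \cdots \geq a_k$, and hence $a_k \leq a_{m_k+1}$.

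The whole argument is a direct verification from the explicit construction, and the only places where one must be a bit careful are the case distinction $k = m_k$ versus $k > m_k$ in the last inequality, and locating the precise threshold (namely $k \geq n_1$) beyond which $m_k$ is defined. These are minor, so I do not anticipate a substantive obstacle.
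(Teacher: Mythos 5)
The paper does not prove this lemma; it is quoted from Maing\'e \cite{mai} with only a citation. Your verification is correct and complete, and it is essentially the standard argument from that source: well-definedness of $m_k$ for $k\geq n_1$, monotonicity and divergence of $\{m_k\}$ from the nestedness of the index sets, the first inequality from the defining property, and the second from the descending chain $a_{m_k+1}\geq a_{m_k+2}\geq\cdots\geq a_k$ forced by maximality (with the $k=m_k$ case handled separately).
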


\begin{lem}\label{2.5}
\cite{th}  Let $\{x_n\}$ be a sequence generated by algorithm \eqref{algo1}.  Then
         \begin{align*}
           \| x_{n+1}-p\|^2\leq\| x_n-p\|^2-(1-\mu^2)\| x_n-y_n\|^2\;\;\forall\;p\in VI(C,A)
        \end{align*}
\end{lem}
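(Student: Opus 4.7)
The plan is to prove this by the standard Tseng-type energy estimate, expanding $\|x_{n+1}-p\|^2$, bounding $\|y_n-p\|^2$ via the projection characterization, using monotonicity of $A$ together with $p\in VI(C,A)$ to cancel two inner products, and finally absorbing the remaining quadratic term using the step-size condition \eqref{1}.

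First I would write $x_{n+1}=y_n-\lambda(Ay_n-Ax_n)$ (in the Lipschitz case where $X=H$; otherwise one first uses nonexpansiveness of $P_C$ on the same vector, noting $p\in C$), and use the identity in Lemma \ref{2.1} to expand
\begin{equation*}
\|x_{n+1}-p\|^2=\|y_n-p\|^2-2\lambda\langle y_n-p,Ay_n-Ax_n\rangle+\lambda^2\|Ay_n-Ax_n\|^2.
\end{equation*}
Next I would estimate $\|y_n-p\|^2$. Since $y_n=P_C(x_n-\lambda Ax_n)$ and $p\in C$, Lemma \ref{pc} yields $\langle x_n-\lambda Ax_n-y_n,\,p-y_n\rangle\le 0$, i.e.\ $\langle x_n-y_n,y_n-p\rangle\ge \lambda\langle Ax_n,y_n-p\rangle$. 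Combining this with the expansion $\|y_n-p\|^2=\|x_n-p\|^2-\|x_n-y_n\|^2+2\langle y_n-x_n,y_n-p\rangle$ gives
\begin{equation*}
\|y_n-p\|^2\le\|x_n-p\|^2-\|x_n-y_n\|^2-2\lambda\langle Ax_n,y_n-p\rangle.
\end{equation*}

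Substituting this bound into the first display produces a $-2\lambda\langle Ay_n,y_n-p\rangle$ term (after the two inner products combine). Here I would invoke that $p\in VI(C,A)$ so $\langle Ap,y_n-p\rangle\ge 0$, together with monotonicity of $A$ to get $\langle Ay_n-Ap,y_n-p\rangle\ge 0$; adding these yields $\langle Ay_n,y_n-p\rangle\ge 0$, and therefore the $-2\lambda\langle Ay_n,y_n-p\rangle$ term may be dropped. This leaves
\begin{equation*}
\|x_{n+1}-p\|^2\le\|x_n-p\|^2-\|x_n-y_n\|^2+\lambda^2\|Ay_n-Ax_n\|^2.
\end{equation*}

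The final step is to apply condition \eqref{1}, $\lambda\|Ax_n-Ay_n\|\le\mu\|x_n-y_n\|$, which directly gives $\lambda^2\|Ay_n-Ax_n\|^2\le\mu^2\|x_n-y_n\|^2$, producing the desired inequality. The only subtle point I anticipate is the projection/nonexpansiveness step when $X=C$: one must verify that $p\in C$ so that $\|P_C(z_n)-p\|\le\|z_n-p\|$ is available before applying the expansion; beyond that, the calculation is a routine chain of equalities and inequalities, and the structural reason the proof works is the monotonicity-based cancellation of the two $\langle A\cdot,y_n-p\rangle$ terms.
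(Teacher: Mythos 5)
Your argument is correct and complete: the expansion of $\|x_{n+1}-p\|^2$, the projection inequality from Lemma \ref{pc} to bound $\|y_n-p\|^2$, the cancellation via monotonicity plus $p\in VI(C,A)$ to drop $-2\lambda\langle Ay_n,y_n-p\rangle$, and the absorption of $\lambda^2\|Ay_n-Ax_n\|^2$ by condition \eqref{1} (or by $\lambda L<1$ in the Lipschitz case, which is how the present paper invokes the lemma with $\mu=\lambda L$) is exactly the standard Tseng-type estimate. Note that this paper does not prove the lemma at all -- it imports it from the cited reference of Thong and Hieu -- so your write-up simply supplies the standard proof from that source, including the correct handling of the $X=C$ case via nonexpansiveness of $P_C$ at $p\in C$.
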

  \section{Main results}
Ww immediately present our main result.
\begin{thm}\label{main1}
 Let $C$ be a nonempty closed convex subset of a real Hilbert space $H$, $A$ be a $L$-Lipschitz continuous mapping on $C$ and $\lambda\in(0,1)$ such that $\lambda L<1$. Suppose that $f:H \rightarrow H$ is a contraction mapping with a constant $\rho \in [0,1) $.  Let $\{e_n\}\subseteq H$ be a sequence of computational errors, $x_{0}\in H$ be arbitrary and $\{x_n\}$, $\{y_n\}$ and $\{z_n\}$  be the sequences generated by
   \begin{equation}\label{algo2}
     \begin{cases}
       y_{n}=P_C(x_n-\lambda Ax_n),  \\
       z_n=y_n-\lambda(Ay_n-Ax_n),\\
       x_{n+1}=\alpha_nf(x_n)+\beta_n z_n+e_n,
    \end{cases}
   \end{equation}
where $\{\alpha_n\}$ and $\{\beta_n\}$ are real sequences in $[0,1]$ such that $\alpha_n+\beta_n\leq 1$ for each $n\geq1$.
 Also assume the following conditions:
 \begin{enumerate}
   \item [(a)] $\Sigma_{n=1}^\infty\alpha_n\beta_n=\infty,$\\
   \item [(b)] $\Sigma_{n=1}^\infty(1-\alpha_n-\beta_n)<\infty,$\\
   \item [(c)] $\displaystyle\lim_{n\rightarrow\infty}\frac{(1-\alpha_n-\beta_n)}{\alpha_n}
 =\displaystyle\lim_{n\rightarrow\infty}\alpha_n=0$,\\
    \item [(d)] $\Sigma_{n=1}^\infty \| e_n\|<\infty$.
 \end{enumerate}
   Then
    \begin{enumerate}
      \item [(i)] $VI(C,A)\neq\emptyset$ if and only if $\{x_n\}$ is bounded and $\displaystyle\liminf_{n\rightarrow\infty}\| x_n-y_n\|=0$.\\
      Suppose $VI(C,A)\neq\emptyset$. Then
      \item [(ii)]  if $\displaystyle\lim_{n\rightarrow\infty}\frac{\| e_n\|}{\alpha_n}=0$, then $\{x_n\}$ converges strongly to $q=P_{VI(C,A)}\circ f(q)$,
where $P_{VI(C,A)}\circ f: H \rightarrow VI(C, A)$ is the mapping defined by $P_{VI(C,A)}\circ f(x)=P_{VI(C,A)}(f(x))$
for each $x\in H$.
    \end{enumerate}
\end{thm}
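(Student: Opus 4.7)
The candidate limit is $q$, the unique fixed point of $P_{VI(C,A)}\circ f$ (which exists by the Banach contraction principle since $P_{VI(C,A)}$ is nonexpansive and $f$ is a $\rho$-contraction). The plan is to prove strong convergence $x_n\to q$ by a combined Maing\'e--Xu argument and to harvest part (i) as a by-product. The single workhorse is Lemma \ref{2.5} applied with $\mu=\lambda L<1$, yielding
\begin{equation*}
\norm{z_n-p}^2\leq\norm{x_n-p}^2-(1-\lambda^2L^2)\norm{x_n-y_n}^2 \quad \text{for every } p\in VI(C,A).
\end{equation*}
I would first prove boundedness of $\{x_n\}$, using only (a)--(d). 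Writing $\alpha_n f(x_n)+\beta_n z_n+(1-\alpha_n-\beta_n)\cdot 0$ as a convex combination and combining the triangle inequality with $\norm{z_n-q}\leq\norm{x_n-q}$ gives
\begin{equation*}
\norm{x_{n+1}-q}\leq(1-\alpha_n(1-\rho))\norm{x_n-q}+\alpha_n\norm{f(q)-q}+(1-\alpha_n-\beta_n)\norm{q}+\norm{e_n},
\end{equation*}
and conditions (b), (d) make the last two perturbations summable, so a routine induction produces a uniform bound on $\norm{x_n-q}$.

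Next I would derive the sharp squared-norm recursion. Splitting $x_{n+1}-e_n-q=\beta_n(z_n-q)+[\alpha_n f(x_n)-(1-\beta_n)q]$ and applying the second inequality of Lemma \ref{2.1} twice (once to peel off $e_n$, once to peel off the $\alpha_n$-term), then using the $\rho$-contractivity of $f$ and Lemma \ref{2.5}, leads to
\begin{equation*}
\norm{x_{n+1}-q}^2\leq(1-s_n)\norm{x_n-q}^2+s_n\tau_n+\delta_n-\beta_n^2(1-\lambda^2L^2)\norm{x_n-y_n}^2,
\end{equation*}
with $\delta_n=O((1-\alpha_n-\beta_n)+\norm{e_n})$ summable by (b), (d); $\tau_n=\frac{1+o(1)}{1-\rho}\langle f(q)-q,x_{n+1}-q\rangle+O(\norm{e_n}/\alpha_n)$; and $s_n=(1-\beta_n^2-2\alpha_n\rho)/(1-\alpha_n\rho)\sim 2\alpha_n(1-\rho)$. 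Condition (c) forces $\beta_n\to 1$, so $1-\beta_n^2\sim 2\alpha_n$ and $s_n>0$ eventually, while (a) with $\beta_n\to 1$ gives $\sum s_n=\infty$.

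The hardest step is establishing $\limsup_n\tau_n\leq 0$. I would use Lemma \ref{2.4} to split into two cases. In the easy case, where $\{\norm{x_n-q}^2\}$ is eventually non-increasing, the recursion drives $\norm{x_n-y_n}\to 0$; for any weak cluster point $x^*$ of $\{x_n\}$, the definition $y_n=P_C(x_n-\lambda A x_n)$ and Lemma \ref{pc} yield $\langle x_n-\lambda A x_n-y_n,y-y_n\rangle\leq 0$ for $y\in C$, and combining with the monotonicity of $A$ (implicit in Lemma \ref{2.5}) via the Minty-type inequality $\langle Ay,y-y_n\rangle\geq\langle Ay_n,y-y_n\rangle$ and passing to the weak limit shows $\langle Ay,y-x^*\rangle\geq 0$ for all $y\in C$, so by Minty's lemma $x^*\in VI(C,A)$; the characterization $q=P_{VI(C,A)}f(q)$ (again by Lemma \ref{pc}) then gives $\limsup\langle f(q)-q,x_n-q\rangle\leq 0$. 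In the complementary Maing\'e case, the same weak-limit argument along the subsequence $\{m_k\}$ closes matters, with $\norm{e_n}/\alpha_n\to 0$ (the extra hypothesis of (ii)) and $(1-\alpha_n-\beta_n)/\alpha_n\to 0$ (from (c)) being exactly what is needed to force $\delta_{m_k}/s_{m_k}\to 0$. Lemma \ref{2.3} then delivers $\norm{x_n-q}\to 0$, finishing (ii).

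Part (i) follows with less work and does not require $\norm{e_n}/\alpha_n\to 0$. The forward direction uses the coarser recursion $\norm{x_{n+1}-p}^2\leq\norm{x_n-p}^2-\beta_n^2(1-\lambda^2L^2)\norm{x_n-y_n}^2+O(\alpha_n+(1-\alpha_n-\beta_n)+\norm{e_n})$: if $\liminf\norm{x_n-y_n}>\varepsilon$, then since $\alpha_n\to 0$ the perturbation is eventually smaller than $\tfrac12 c\varepsilon^2$, forcing a linear drift of $\norm{x_n-p}^2$ by $-\tfrac12 c\varepsilon^2$ per step and contradicting nonnegativity. The converse extracts a weakly convergent subsequence from the given $\liminf\norm{x_n-y_n}=0$ and runs the same Minty argument to exhibit an element of $VI(C,A)$. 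The main obstacle throughout is the simultaneous bookkeeping of the error $e_n$ and the coefficient slack $\gamma_n=1-\alpha_n-\beta_n$; conditions (b)--(d) together with $\norm{e_n}/\alpha_n\to 0$ are precisely what sort each perturbation into the ``summable'' slot or the ``$o(\alpha_n)$ into $\sigma_n$'' slot of Xu's lemma, making the algorithm robust to computational errors.
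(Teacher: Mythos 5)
Your proposal is correct and follows essentially the same route as the paper: Lemma \ref{2.5} with $\mu=\lambda L$ as the workhorse, a boundedness estimate, a squared-norm recursion split into the Maing\'e two-case analysis, the Minty-type argument at weak cluster points together with Lemma \ref{pc}, and Xu's Lemma \ref{2.3} to close. The only (minor, and in fact favorable) deviations are that you prove boundedness by induction on a linear recursion rather than the paper's max-estimate, you keep the $-\|x_n-y_n\|^2$ term inside the main recursion instead of in a separate inequality, and in Case 1 you argue via weak cluster points rather than the paper's unjustified Cauchy claim.
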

\begin{proof}
(i) Assume that $\{x_n\}$ is a bounded sequence and  $\displaystyle\liminf_{n\rightarrow\infty}\| x_n-y_n\|=0$. Then there exists a subsequence $\{n_i\}\subset\mathbb{N}$ such that $\| x_{n_i}-y_{n_i}\|\rightarrow 0$ when $i\rightarrow\infty$. Note $\{x_{n_i}\}$ is a bounded sequence. Hence there exists a subsequence $\{x_{n_{i_k}}\}$ of $\{x_{n_i}\}$ such that $\{x_{n_{i_k}}\}$ converges weakly to some $x\in C$. Now noting that since $y_{n_{i_k}}=P_C(x_{n_{i_k}}-\lambda Ax_{n_{i_k}})$, for all $z\in C$ we have
          \begin{align*}
            0\geq & \langle x_{n_{i_k}}-\lambda Ax_{n_{i_k}}-y_{n_{i_k}},z-y_{n_{i_k}}\rangle  \\
             = & \langle  x_{n_{i_k}}-y_{n_{i_k}},z-y_{n_{i_k}}\rangle-\lambda\langle Ax_{n_{i_k}},z-y_{n_{i_k}}\rangle \\
             = & \langle  x_{n_{i_k}}-y_{n_{i_k}},z-y_{n_{i_k}}\rangle-\lambda\langle Ax_{n_{i_k}},z-x_{n_{i_k}}\rangle-\lambda\langle Ax_{n_{i_k}},x_{n_{i_k}}-y_{n_{i_k}}\rangle \\
             = & \langle  x_{n_{i_k}}-y_{n_{i_k}},z-y_{n_{i_k}}\rangle-\lambda\langle Ax_{n_{i_k}}-Az,z-x_{n_{i_k}}\rangle-\lambda\langle Az,z-x_{n_{i_k}}\rangle\\
             - & \lambda\langle Ax_{n_{i_k}},x_{n_{i_k}}-y_{n_{i_k}}\rangle\\
             \geq & \langle  x_{n_{i_k}}-y_{n_{i_k}},z-y_{n_{i_k}}\rangle-\lambda\langle Az,z-x_{n_{i_k}}\rangle- \lambda\langle Ax_{n_{i_k}},x_{n_{i_k}}-y_{n_{i_k}}\rangle.
             \end{align*}
Now we have,
$$-\lambda\langle Az,z-x_{n_{i_k}}\rangle\leq  \langle  x_{n_{i_k}}-y_{n_{i_k}},y_{n_{i_k}}-z\rangle + \lambda\langle Ax_{n_{i_k}},x_{n_{i_k}}-y_{n_{i_k}}\rangle.$$
Therefore,
$$-\lambda\langle Az,z-x_{n_{i_k}}\rangle\leq  \|  y_{n_{i_k}}-x_{n_{i_k}}\|\| y_{n_{i_k}}-z\|+ \lambda\| Ax_{n_{i_k}}\|\| x_{n_{i_k}}-y_{n_{i_k}}\|.$$
From  $\displaystyle\lim_{k\rightarrow\infty}\| x_{n_{i_k}}-y_{n_{i_k}}\|=0$, we have
 $-\lambda\langle Az, z-x\rangle\leq0$ for all $z\in C$. Now let $y\in C$ and $0<t<1$, and from the convexity $C$ we have
$y_t=[ty+(1-t)x]\in C$. Therefore
            \begin{align*}
              0\leq \langle Ay_t, y_t-x\rangle= \langle Ay_t, ty-tx\rangle=t\langle Ay_t, y-x\rangle.
            \end{align*}
Since $0<t<1$ then $\langle Ay_t, y-x\rangle\geq0$ for all $y\in C$. Because the mapping $A$ and multiplication are continuous,  if $t\rightarrow0$ then we have $\langle Ax, y-x\rangle\geq0$ for all $y\in C$, i.e; $x\in VI(C,A)$.\\

For the converse fix $p\in VI(C,A)$. Using Lemma \ref{2.5} we have
\begin{align*}\label{zn}
  \| z_n-p\|^2\leq\| x_n-p\|^2-(1-(\lambda L)^2)\| x_n-y_n\|^2.
\end{align*}
  Therefore,
\begin{equation}\label{znxn}
   \| z_n-p\|\leq\| x_n-p\|.
\end{equation}
Using the above inequality  we have
    \begin{alignat*}{1}
      \| x_{n+1}-p\|= &\|\alpha_n f(x_n)+\beta_n z_n+e_n-p\| \\
      = & \| \alpha_n(f(x_n)-p)+\beta_n(z_n-p)-(1-\alpha_n-\beta_n)p+e_n\|\\
      \leq&  \alpha_n\| f(x_n)-p\|+\beta_n\| z_n-p\|+(1-\alpha_n-\beta_n)\| p\|+\| e_n\| \\
      \leq & \alpha_n\| f(x_n)-f(p)\|+\alpha_n\| f(p)-p\|+\beta_n\| x_n-p\|\\
      +&(1-\alpha_n-\beta_n)\| p\|+\| e_n\| \\
      \leq &  \alpha_n\rho\| x_n-p\|+\alpha_n\| f(p)-p\|+\beta_n\| x_n-p\|\\
      +&(1-\alpha_n-\beta_n)\| p\| +\| e_n\| \\
      = & (\alpha_n\rho+\beta_n)\| x_n-p\|+\alpha_n(1-\rho)\frac{\| f(p)-p\|}{1-\rho}\\
      +&(1-\alpha_n-\beta_n)\| p\|+\| e_n\| \\
      \leq & \max\{\| x_n-p\|, \frac{\| f(p)-p\|}{1-\rho}, \| p\|\}+\| e_n\|,
       \end{alignat*}
so the sequence $\{x_n\}$ is bounded. \\

(ii) Let $p\in VI(C,A)$, by part (i) we have that $\{x_n\}$ is bounded. Then $\{f(x_n)\}$, $\{y_n\}$ and $\{z_n\}$ are bounded.
Now  using Lemma \ref{2.5} we have
 \begin{equation}\label{zn<xn}
             \| z_n-p\|^2\leq\| x_n-p\|^2-(1-(\lambda L)^2)\| x_n-y_n\|^2\;\;\forall\;p\in VI(C,A),
          \end{equation}
(note that our  sequence $\{z_n\}$ in algorithm \eqref{algo2}  replaces $\{x_{n+1}\}$ in Lemma \ref{2.5}).\\
 Next, by the convexity of $\| .\|^2$ and the relation $\| x+y\|^2=\| x\|^2+2\langle x,y\rangle+\| y\|^2$, we conclude that,
         \begin{align*}
           \| x_{n+1}-p\|^2= &\|\alpha_n f(x_n)+\beta_n z_n+e_n-p\|^2 \\
            = & \| \alpha_n(f(x_n)+e_n-p)+\beta_n(z_n+e_n-p)+(1-\alpha_n-\beta_n)(e_n-p)\|^2 \\
            \leq & \alpha_n\| f(x_n)+e_n-p\|^2+\beta_n\| z_n+e_n-p\|^2\\
            +&(1-\alpha_n-\beta_n)\| e_n-p\|^2\\
            \leq & \alpha_n\| f(x_n)-p\|^2+(1-\alpha_n)\| z_n-p\|^2\\
            +&(1-\alpha_n-\beta_n))\| p\|^2+(\alpha_n+\beta_n+(1-\alpha_n-\beta_n))\| e_n\|^2 \\
            +&2\langle\alpha_n f(x_n)-\alpha_np+\beta_n z_n-\beta_np-(1-\alpha_n-\beta_n)p, e_n\rangle\\
            \leq & \alpha_n\| f(x_n)-p\|^2+(1-\alpha_n)\| x_n-p\|^2\\
            -&(1-\alpha_n)(1-(\lambda L)^2)\| x_n-y_n\|^2+(1-\alpha_n-\beta_n)\| p\|^2+\| e_n\|^2\\
            +&2\langle \alpha_n f(x_n)+\beta_n z_n-p, e_n\rangle \;\;\;\;\;\;\;\;\;\;\;\;\;\;\;\;\;\;\;\;\;\;\;\;\;\;\;\;\;\; (by\;\eqref{zn<xn}) \\
            \leq & \alpha_n\| f(x_n)-p\|^2+\| x_n-p\|^2\\
            -&(1-\alpha_n)(1-(\lambda L)^2)\| x_n-y_n\|^2
            +(1-\alpha_n-\beta_n)\| p\|^2+\| e_n\|^2\\
            +&2\| \alpha_n f(x_n)+\beta_n z_n-p\|\| e_n\|.
         \end{align*}
Therefore
          \begin{align}\label{xn-yn1}
            (1-\alpha_n)(1-(\lambda L)^2)\| x_n-y_n\|^2\leq&\| x_n-p\|^2 -\| x_{n+1}-p\|^2\nonumber\\
           +&\alpha_n\| f(x_n)-p\|^2+(1-\alpha_n-\beta_n)\| p\|^2\nonumber\\
           +&\| e_n\|^2+2\| \alpha_n f(x_n)+\beta_n z_n-p\|\| e_n\|.
          \end{align}
Also we have,
         \begin{align}\label{xn+1-xn}
          \| x_{n+1}-x_n\|= &\|\alpha_n f(x_n)+\beta_n z_n+e_n-x_n\|\nonumber \\
          = & \| \alpha_n(f(x_n)-x_n)+\beta_n(z_n-x_n)-(1-\alpha_n-\beta_n)x_n+e_n\|\nonumber \\
          \leq& \alpha_n\| f(x_n)-x_n\|+\beta_n\| z_n-x_n\|\nonumber\\
          +&(1-\alpha_n-\beta_n)\| x_n\|+\| e_n\|\nonumber\\
          = & \alpha_n\| f(x_n)-x_n\|+\beta_n\| y_n-\lambda(Ay_n-Ax_n)-x_n\|\nonumber\\
          +&(1-\alpha_n-\beta_n)\| x_n\|+\| e_n\|\nonumber\\
          \leq &  \alpha_n\| f(x_n)-x_n\|+\beta_n\| y_n-x_n\|+\beta_n\lambda L\| y_n-x_n\|\nonumber\\
          +&(1-\alpha_n-\beta_n)\| x_n\|+\| e_n\|.
         \end{align}
Note that $P_{VI(C,A)}\circ f$ is a contraction mapping. Then by Banach contraction principle, there exists a unique element $q \in  H$ such that $q = P_{VI(C,A)}\circ f(q)$. 
Now we claim that $\{x_n\} $  converges strongly to  $q=P_{VI(C,A)}\circ f(q)$. For proved this claim, we will show that $\| x_n-q\|^2\rightarrow 0$ by considering two
possible cases on the sequence $\| x_n-q\|^2$.\\
\textbf{Case1.} Suppose there exists  some $n_0\in\mathbb{N}$ such that $\| x_{n+1}-q\|^2\leq\| x_n-q\|^2$ for all $n\geq n_0$. Then $\displaystyle\lim_{n\rightarrow\infty}\| x_n-q\|$  exists. From  \eqref{xn-yn1}  and our assumptions $\displaystyle\lim_{n\rightarrow\infty}\| x_n-y_n\|=0$, hence from \eqref{xn+1-xn}  we obtain that $\{x_n\}$ is a Cauchy sequence in the Hilbert space $H$, therefore $\{x_n\}$ is convergent. Now we show that $\{x_n\} $  converges strongly to  $q$.
Note that,
         \begin{align*}
            \| x_{n+1}-q\|^2= &\|\alpha_n f(x_n)+\beta_n z_n+e_n-q\|^2 \\
            = & \| \alpha_n(f(x_n)+e_n-q)+\beta_n(z_n+e_n-q)+(1-\alpha_n-\beta_n)(e_n-q)\|^2 \\
            \leq &\beta_n^2 \| z_n+e_n-q\|^2\\
             +2&\langle\alpha_n(f(x_n)+e_n-q)+(1-\alpha_n-\beta_n)(e_n-q),x_{n+1}-q \rangle\;\;\;\;(by\;Lemma \;\ref{2.1}) \\
             = &\beta_n^2 \| z_n+e_n-q\|^2+2\alpha_n\langle(f(x_n)+e_n-q),x_{n+1}-q \rangle \\
             +&2(1-\alpha_n-\beta_n)\langle e_n-q, x_{n+1}-q \rangle\\
            = &\beta_n^2 \| z_n+e_n-q\|^2+2\alpha_n\langle(f(x_n)-q),x_{n+1}-q \rangle\\
              +&2\alpha_n\langle e_n,x_{n+1}-q \rangle+2(1-\alpha_n-\beta_n)\langle e_n-q, x_{n+1}-q \rangle\\
             \leq &\beta_n^2 \| z_n+e_n-q\|^2+2\alpha_n\rho\| x_n-q\|\| x_{n+1}-q \|\\
             +2&\alpha_n\langle f(q)-q,x_{n+1}-q \rangle+ 2\alpha_n\langle e_n,x_{n+1}-q \rangle\\
             +&2(1-\alpha_n-\beta_n)\langle e_n-q, x_{n+1}-q \rangle\\
             \leq &\beta_n^2 \| z_n-q\|^2+2\beta_n^2\langle e_n, z_n+e_n-q\rangle+2\alpha_n\rho\| x_n-q\|^2\;\;\;\;( by\;Lemma\; \ref{2.1})\\
             +2&\alpha_n\langle f(q)-q,x_{n+1}-q \rangle+ 2\alpha_n\langle e_n,x_{n+1}-q \rangle\\
             +&2(1-\alpha_n-\beta_n)\langle e_n-q, x_{n+1}-q \rangle\\
             \leq &(1-\alpha_n)^2 \| x_n-q\|^2+2\alpha_n\rho\| x_n-q\|^2\\
             +2&\alpha_n\langle f(q)-q,x_{n+1}-q \rangle+2\beta_n^2\langle e_n, z_n+e_n-q\rangle\\
             +&2\alpha_n\langle e_n,x_{n+1}-q \rangle+2(1-\alpha_n-\beta_n)\langle e_n-q, x_{n+1}-q \rangle\\
             =&(1-2\alpha_n(1-\rho))\| x_n-q\|^2\\
             +&2\alpha_n(1-\rho)\bigg[\frac{\alpha_n\| x_n-q\|^2}{2(1-\rho)}
             +\frac{\langle f(q)-q,x_{n+1}-q\rangle}{1-\rho}\bigg]\\
             +&2\beta_n^2\langle e_n, z_n+e_n-q\rangle+ 2\alpha_n\langle e_n,x_{n+1}-q \rangle\\
             +&2(1-\alpha_n-\beta_n)\langle e_n-q, x_{n+1}-q \rangle
           \end{align*}
          \begin{align}\label{en}
             \leq&(1-2\alpha_n(1-\rho))\| x_n-q\|^2\nonumber\\
             +&2\alpha_n(1-\rho)\bigg[\frac{\alpha_n\| x_n-q\|^2}{2(1-\rho)}
             +\frac{\langle f(q)-q,x_{n+1}-q\rangle}{1-\rho}\bigg]\nonumber\\
             +&2\beta_n^2\| e_n\|\| z_n+e_n-q\|+ 2\alpha_n\| e_n\|\| x_{n+1}-q \|\nonumber\\
             +&2(1-\alpha_n-\beta_n)\| e_n-q\|\| x_{n+1}-q \|\nonumber\\
             =&(1-2\alpha_n(1-\rho))\| x_n-q\|^2\nonumber\\
             +&2\alpha_n(1-\rho)\bigg[\frac{\alpha_n\| x_n-q\|^2}{2(1-\rho)}
             +\frac{\langle f(q)-q,x_{n+1}-q\rangle}{1-\rho}\\
             +&\frac{\| e_n\|}{\alpha_n(1-\rho)}\beta_n^2\| z_n+e_n-q\|+\frac{\| e_n\|\| x_{n+1}-q \|}{1-\rho}+\frac{(1-\alpha_n-\beta_n)\| e_n-q\|\| x_{n+1}-q \|}{\alpha_n (1-\rho)}\bigg].
        \end{align}
Since $\{x_n\}$ is strongly convergent, so $x_n\rightarrow z$ for some $z\in C$, then we conclude that $x_n\rightharpoonup z$. Also as in the proof of part(i) we conclude $z\in VI(C, A)$.
Therefore from Lemma \ref{pc}
           \begin{align}\label{f(q)-q}
            \displaystyle\limsup_{n\rightarrow\infty}\langle f(q)-q,x_{n+1}-q\rangle=
            \langle f(q)-q,z-q\rangle\leq0.
          \end{align}
Next we consider the sequences $\{a_n\}, \{\sigma_n\}$ and $\{\gamma_n\}$ in \eqref{en} as follows,
           \begin{align*}
             a_n&:=\| x_n-q\|^2\\
             \sigma_n&:=\frac{\alpha_n\| x_n-q\|^2}{2(1-\rho)}+\frac{\langle f(q)-q,x_{n+1}-q\rangle }{1-\rho}\\
             \gamma_n&:=2\beta_n^2\| e_n\|\| z_n+e_n-q\|+ 2\alpha_n\| e_n\|\| x_{n+1}-q \|\\
             +&2(1-\alpha_n-\beta_n)\| e_n-q\|\| x_{n+1}-q \|.
           \end{align*}
Clearly $\displaystyle\limsup_{n\rightarrow\infty}\sigma_n\leq0, \gamma_n\geq0$ and also $\Sigma_{n=0}^\infty\gamma_n<\infty$, hence from Lemma \ref{2.3} we have
$\displaystyle\lim_{n\rightarrow\infty}\| x_n-q\|^2=0$.\\
\textbf{Case 2.} Suppose there exists a subsequence $\{\| x_{n_j}-q\|^2\}$ of $\{\| x_n-q\|^2\}$ such that $\| x_{n_j}-q\|^2<\| x_{n_j+1}-q\|^2$ for all $j\in\mathbb{N}$. Now from Lemma \ref{2.4} there exists a nondecreasing sequence $\{m_k\}$ of $\mathbb{N}$ such that $\displaystyle\lim_{k\rightarrow\infty}m_k=\infty$ and the following inequalities hold for all $k\in\mathbb{N}$:
$$\| x_{m_k}-q\|^2\leq\| x_{m_k+1}-q\|^2\,\,\,\hbox{ and  }\,\,\,\| x_k-q\|^2\leq\| x_{m_k+1}-q\|^2.$$
Now from \eqref{xn-yn1}  we have
          \begin{align*}
            (1-\alpha_{m_k})(1-(\lambda L)^2)\| x_{m_k}-y_{m_k}\|^2\leq&\| x_{m_k}-p\|^2 -\| x_{{m_k}+1}-p\|^2\\
            +&\alpha_{m_k}\| f(x_{m_k})-p\|^2\\
            +&(1-\alpha_{m_k}-\beta_{m_k})\| p\|^2 + \| e_{m_k} \|^2 \\
            +&2 \|\alpha_{m_k}f(x_{m_k}) +\beta_{m_k}z_{m_k}-p \|\| e_{m_k}\|.
          \end{align*}
Hence, $\displaystyle\lim_{k\rightarrow\infty}\| x_{m_k}-y_{m_k}\|=0$,
therefore from \eqref{xn+1-xn} (adjusted)  $\displaystyle\lim_{k\rightarrow\infty}\| x_{{m_k}+1}-x_{m_k}\|=0$.

From \eqref{en} (adjusted) we conclude that
           \begin{align*}
             \| x_{m_k+1}-q\|^2\leq&(1-2\alpha_{m_k}(1-\rho))\| x_{m_k}-q\|^2\\
             +&2\alpha_{m_k}(1-\rho)\bigg[\frac{\alpha_{m_k}\| x_{m_k}-q\|^2}{2(1-\rho)}
             +\frac{\langle f(q)-q,x_{{m_k}+1}-q\rangle}{1-\rho}\bigg]\\
             +&2\beta_{m_k}^2\| e_{m_k}\|\| z_{m_k}+e_{m_k}-q\|+ 2\alpha_{m_k}\| e_{m_k}\|\| x_{{m_k}+1}-q \|\\
             +&2(1-\alpha_{m_k}-\beta_{m_k})\| e_{m_k}-q\|\| x_{{m_k}+1}-q \|\\
             \leq& (1-2\alpha_{m_k}(1-\rho))\| x_{m_k+1}-q\|^2\\
             +&2\alpha_{m_k}(1-\rho)\bigg[\frac{\alpha_{m_k}\| x_{m_k}-q\|^2}{2(1-\rho)}
             +\frac{\langle f(q)-q,x_{{m_k}+1}-q\rangle}{1-\rho}\bigg]\\
             +&2\beta_{m_k}^2\| e_{m_k}\|\| z_{m_k}+e_{m_k}-q\|+ 2\alpha_{m_k}\| e_{m_k}\|\| x_{{m_k}+1}-q \|\\
             +&2(1-\alpha_{m_k}-\beta_{m_k})\| e_{m_k}-q\|\| x_{{m_k}+1}-q \|.
          \end{align*}
Therefore we have
           \begin{align*}
           \| x_k-q\|^2\leq\| x_{m_k+1}-q\|^2\leq&\frac{\alpha_{m_k}\| x_{m_k}-q\|^2}{2(1-\rho)}
             +\frac{\langle f(q)-q,x_{{m_k}+1}-q\rangle}{1-\rho}\\
             +&\frac{\| e_{m_k}\|}{\alpha_{m_k}}M_1+ \frac{(1-\alpha_{m_k}-\beta_{m_k})}{\alpha_{m_k}}M_2,
           \end{align*}
where
       \begin{align*}
         M_1&=\frac{\beta_{m_k}^2}{1-\rho}\textbf{(}\| z_{m_k}+e_{m_k}-q\|+ 2\alpha_{m_k}\| x_{{m_k}+1}-q\|\textbf{)},\\
         M_2&=\frac{1}{1-\rho}\| e_{m_k}-q\|\| x_{{m_k}+1}-q \|.
       \end{align*}  From our assumptions   $\displaystyle\lim_{k\rightarrow\infty}\alpha_{m_k}=0$, $\displaystyle\lim_{k\rightarrow\infty}\frac{(1-\alpha_{m_k}-\beta_{m_k})}{\alpha_{m_k}}=0$,   $\displaystyle\lim_{k\rightarrow\infty}\frac{\| e_{m_k}\|}{\alpha_{m_k}}=0$ and \eqref{f(q)-q} (adjusted), we conclude
$\displaystyle\limsup_{k\rightarrow\infty}\| x_k-q\|^2=0$. Hence $x_k\rightarrow q$ which completes the proof of Part (ii).
\end{proof}
\begin{op}
  Can we remove the condition $\displaystyle\liminf_{n\rightarrow\infty}\| x_n -y_n\|=0$ in (i) in Theorem \ref{main1}?
\end{op}
  \section{Numerical example}
In this section the algorithm \eqref{algo2} is illustrated with some examples.
\begin{ex}\label{exam4}
Put $\alpha_n=\frac{1}{n+1} ,\; \beta_n =1-\frac{1}{n+1},\;  e_n=0,\;  H=L_2[0, 1],\; A\equiv I,\; F(x)\equiv 1,\; \lambda=\frac{1}{2}, \; C=B(0, 1)=\{f\in L_2[0,1] : \| f\| \leq 1\}$.\\
Then from the  algorithm \eqref{algo2} we have the following sequences:
                  \begin{align}\label{ex}
                    g&_n= P_C(f_n-\lambda Af_n)=P_C(\frac{f_n}{2} ),\\
                    k&_n=g_n-\lambda(Ag_n - Af_n)=\frac{1}{2}( f_n + g_n),\nonumber\\
                    f&_{n+1}= \alpha_n F(f_n) + \beta_n k_n+e_n= \frac{1}{n+1} + \frac{n}{n+1} k_n, \nonumber \\
                 \end{align}
where  $P_C$ is as follows
                 \begin{equation}\label{pc1}
                   P_{B(z,\rho)}(x)=
                   \begin{cases}
                     x & \|x-z\|\leq \rho,\\
                     z+\frac{\rho}{\|x-z\|}(x-z) & \|x-z\|> \rho,
                   \end{cases}
                 \end{equation}
where $\rho >0$ (see \cite{ca}).
We have, 
                  \begin{align*}
                    VI(C, A)=&\{f\in C: \langle Af,  g-f \rangle \geq 0\;\;,\;\;\; \forall  g \in C\}\\
                     = &\{f\in C: \langle f,  g-f \rangle \geq 0\;\;,\;\;\; \forall  g \in C \}.\\
                  \end{align*}
Obviously $0 \in VI(C, A)$, hence $ VI(C, A)\neq \emptyset$.\\
Now with $f_1=2$ and using MATLAB software we see (Figure 1) that $\{ f_n \}$ is convergence to $0$.
\end{ex}
 In the following  example, using MATLBA software, we compare  some similar algorithms and their convergence speed and behavior.
 In particular the TEGM algorithm \eqref{algo1}, VSEGM algorithm \eqref{algo3}, THEGM  algorithm \eqref{algo} and the algorithm \eqref{algo2} are compared.  We see (Figure 2)   algorithm \eqref{algo2} has a higher convergence speed than the other algorithms.
\begin{ex}\label{exam3}
  Let $\alpha_n=\frac{1}{\sqrt{n}},\; \beta_n=1-\frac{1}{\sqrt{n}}-\frac{1}{n^2},\;  \lambda=\frac{1}{2},\; e_n=0,\;  H=\mathbb{R},\; A\equiv I,\; f(x)=\frac{x}{2},\; and\; C=[0, \infty)$, $x_1=2$.
\end{ex}

Now we examine the convergence of the sequences $\{x_n\}, \{y_n\} \; and \; \{z_n\} $  in Theorem \ref{main1} in the following example.
\begin{ex}\label{exam2}
Put $\alpha_n=\frac{1}{\sqrt{n}},\; \beta_n=1-\frac{1}{\sqrt{n}}-\frac{1}{n^2},\;  \lambda=\frac{1}{2},\; e_n=\frac{1}{n^2},\;  H=\mathbb{R},\; A\equiv I,\; f\equiv 1,\; and\; C=[0, \infty)$, $x_1=2$.\\
Then we have
                  \begin{align*}
                    y&_n= P_C(x_n-\lambda Ax_n)=P_C(\frac{1}{2}x_n),\\
                    z&_n=y_n-\lambda(Ay_n - Ax_n)=\frac{1}{2}(y_n + x_n),\\
                    x&_{n+1}= \alpha_n f(x_n) + \beta_n z_n+e_n=\frac{1}{\sqrt{n}}+ (1-\frac{1}{\sqrt{n}}-\frac{1}{n^2})z_n + \frac{1}{n^2}.
                 \end{align*}
Hence,
                  \begin{align*}
                    VI(C, A)=&\{x\in C: \langle Ax,  y-x \rangle \geq 0\;\;,\;\;\; \forall  y \in C\}\\
                     = &\{x\in [0, \infty ): \langle x,  y-x \rangle \geq 0\;\;,\;\;\; \forall  y \in C \}= \{ 0 \}.
                  \end{align*}
Therefore $q=P_{VI(A, C)}\circ f(q)=P_{\{0\}}\circ(\frac{q}{2})= 0$, and
now by Theorem \ref{main1} the sequence $\{ x_n \}$ converges strongly to 0.
\end{ex}
In the following example we examine the case that  $VI(C, A)= \emptyset$ and the sequence generated by the algorithm \eqref{algo2} is divergent.
\begin{ex}\label{exam1}
Put $\alpha_n=\frac{1}{n},\; \beta_n=1-\frac{1}{n},\;  e_n=0,\;  H=\mathbb{R},\; A\equiv -1,\; f\equiv1,\; and\; C=[0, \infty)$, $x_1=2$.\\
Then we have
                  \begin{align*}
                    y&_n= P_C(x_n-\lambda Ax_n)=P_C(x_n+\frac{1}{2}),\\
                    z&_n=y_n-\lambda(Ay_n - Ax_n)=y_n,\\
                    x&_{n+1}= \alpha_n f(x_n) + \beta_n z_n+e_n=\frac{1}{n}+ (1-\frac{1}{n})P_C(x_n+\frac{1}{2}).
                 \end{align*}
Hence,
                  \begin{align*}
                    VI(C, A)=&\{x\in C: \langle Ax,  y-x \rangle \geq 0\;\;,\;\;\; \forall  y \in C\}\\
                     = &\{x\in [0, \infty ): \langle -1,  y-x \rangle \geq 0\;\;,\;\;\; \forall  y \in C \}= \emptyset.
                  \end{align*}
Now,  we will prove by induction that, for all $n\in\mathbb{N}$
                  \begin{align*}
                    y_n=P_C(x_n+\frac{1}{2})=x_n+\frac{1}{2}.
                  \end{align*}
When n=1 then we have $x_1=2\geq-\frac{1}{2}$.\\
Induction step: Suppose for n=k the inequality $x_k\geq-\frac{1}{2}$  holds.\\
Then,        \begin{align*}
                      x_{k+1}=&\frac{1}{k}+(1-\frac{1}{k})P_C(x_k+\frac{1}{2})\\
                      =& \frac{1}{k}+ (1-\frac{1}{k})(x_k+\frac{1}{2})\\
                      =& \frac{1}{k}+ (1-\frac{1}{k})x_k+ (1-\frac{1}{k})\frac{1}{2}\\
                       \geq& \frac{1}{k}+ (1-\frac{1}{k})(-\frac{1}{2})+ (1-\frac{1}{k})\frac{1}{2}\\
                       \geq& -\frac{1}{2}.
                   \end{align*}
Hence, $y_n=x_n+\frac{1}{2}$. Consequently $\displaystyle\liminf_{n\rightarrow \infty}\| x_n - y_n \| =\frac{1}{2}\neq 0$.\\
Next, we show that $\{x_n\}$ is an unbounded sequence.  Note $x_3=\frac{5}{4}>\frac{3}{4}$. \\
Induction step: Suppose for n=k the inequality $x_k > \frac{k}{4}$ holds.\\
Then,
                   \begin{align}\label{induc}
                     x_{k+1}= & \frac{1}{k}+ (1-\frac{1}{k})(x_k+\frac{1}{2})\nonumber\\
                     >& \frac{1}{k}+ (1-\frac{1}{k})(\frac{k}{4}+\frac{1}{2})\nonumber\\
                     =& \frac{1}{k}+\frac{k}{4}-\frac{1}{4}+\frac{1}{2}-\frac{1}{2k}\nonumber\\
                     =& \frac{1}{2k}+ \frac{k}{4} + \frac{1}{4}\nonumber\\
                     >& \frac{k+1}{4},
                   \end{align}
thus $\{x_n\}$ is an unbounded sequence.

\end{ex}


\end{document}